\theoremstyle{plain}
\newtheorem{thm}{Theorem}[section]
\newtheorem{lem}{Lemma}[section]
\newtheorem{rem}{Remark}[section]
\begin{document}

\begin{center}
{\bf \Large A representation for the Kantorovich--Rubinstein distance on the abstract Wiener space}

\vskip20pt

G. V. Riabov
\footnote{The author is grateful to Prof. A. A. Dorogovtsev and A. M. Kulik for valuable discussions and comments. The research is partially supported by the Young Scientists Grant of the National Academy of Sciences of Ukraine.}

{\it Institute of Mathematics, NAS of Ukraine}

\end{center}

\begin{abstract}
A representation for the Kantorovich--Rubinstein distance between probability measures on an abstract Wiener space in terms of the extended stochastic integral (or, divergence) operator is obtained.
\end{abstract}

\section{Introduction}

Consider the abstract Wiener space $(X, H, \mu).$ That is, $(X,\|\cdot\|)$ is a separable Banach space; $\mu$ is a centered Gaussian measure on the Borel $\sigma-$field of $X,$ such that $\mbox{supp } \mu=X;$ $(H,|\cdot|)$ is a separable Hilbert space, that is densely and continuously embedded in $X$ and is such that 
$$
\int_{X} \exp(il(x))\mu(dx)=\exp\bigg(-\frac{1}{2}|l|^2\bigg), \ l\in X^*.
$$
  
The space $\mathcal{M}(X)$ of Borel probability measures on $X$ is endowed with a Kantorovich-Rubinstein distance \cite[\S 1.2]{Villani}
$$
W_1(\nu_1,\nu_2)=\inf_{\pi\in C(\nu_1,\nu_2)}\int_X\int_X |x_1-x_2|\pi(dx_1,dx_2),
$$
where $C(\nu_1,\nu_2)$ is the set of all Borel probability measures on $X\times X$ with marginals $\nu_1$ and $\nu_2.$ 

The aim of the present paper is to establish the following representation for $W_1.$ 

\begin{thm}
\label{thm_main}
Consider probability measures $\nu_0,\nu_1\in \mathcal{M}(X)$ with $\nu_1-\nu_0\ll \mu$ and $\frac{d(\nu_1-\nu_0)}{d\mu}\in L^2(X,\mu).$ Then 
\begin{equation}
\label{eq_main}
W_1(\nu_0,\nu_1)=\inf_{Iu=\frac{d(\nu_1-\nu_0)}{d\mu}}\bigg\{\int_X |u(x)|\mu(dx) \bigg\}.
\end{equation}
\end{thm}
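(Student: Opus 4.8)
The plan is to combine Kantorovich--Rubinstein duality with the duality between the Malliavin derivative $D$ and its adjoint $I=D^{*}$. First I would invoke \cite[\S 1.2]{Villani} to rewrite the left-hand side as
\[
W_1(\nu_0,\nu_1)=\sup\Big\{\int_X f\,d(\nu_1-\nu_0):\ f\ \text{is $1$-Lipschitz w.r.t.}\ |\cdot|\Big\},
\]
the cost $|x_1-x_2|$ being the (lower semicontinuous, extended-valued) Cameron--Martin distance, which is $+\infty$ off $H$. The next step identifies the admissible potentials: a function is $1$-Lipschitz along $H$ if and only if it belongs to the Gaussian Sobolev class and satisfies $|Df(x)|\le 1$ for $\mu$-a.e.\ $x$ (the Wiener-space Rademacher theorem); such $f$ are automatically Gaussian-integrable. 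Writing $\rho=\frac{d(\nu_1-\nu_0)}{d\mu}\in L^2(X,\mu)$, this turns the problem into the linear program $W_1=\sup_{|Df|\le1}\int_X f\rho\,d\mu$.

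For the inequality $W_1\le\inf\{\dots\}$ (the easy direction) I would fix any $u$ with $Iu=\rho$ and any admissible $f$, and integrate by parts: since $I$ is the adjoint of $D$,
\[
\int_X f\rho\,d\mu=\int_X f\,(Iu)\,d\mu=\int_X\langle Df(x),u(x)\rangle\,\mu(dx)\le\int_X|Df(x)|\,|u(x)|\,\mu(dx)\le\int_X|u(x)|\,\mu(dx).
\]
Taking the supremum over $f$ and then the infimum over $u$ yields the bound.

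The reverse inequality is the heart of the matter, and I would obtain it by convex (Fenchel--Rockafellar) duality. Viewing $\rho$ as a continuous linear functional on the Sobolev space, setting $A=D$ with adjoint $A^{*}=I$, and encoding the constraint by the indicator $\Xi$ of the unit ball $\{\,|w|\le1\ \mu\text{-a.e.}\,\}$ in $L^2(X,\mu;H)$, one has $\sup_{|Df|\le1}\int f\rho\,d\mu=-\inf_f[\Xi(Df)-\int_X f\rho\,d\mu]$. Computing the two Legendre transforms then identifies the dual value with the right-hand side of \eqref{eq_main}: the conjugate of the linear term forces the dual variable $u$ to satisfy $Iu=\rho$, while the support function of the unit ball produces precisely $\int_X|u|\,d\mu$.

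The main obstacle is the absence of an obvious constraint qualification: the ball $\{\,|w|\le1\ \mu\text{-a.e.}\,\}$ has empty interior in $L^2(X,\mu;H)$, so one cannot simply quote the standard no-duality-gap criterion, and the extremal ``flow'' $u$ need not exist as an $L^1$-function (the optimal transport flow is naturally a vector measure). I would circumvent this by finite-dimensional approximation: project everything by the conditional expectations onto the first $n$ Ornstein--Uhlenbeck coordinates, where the statement reduces to the classical Beckmann minimal-flow reformulation of $W_1$ for the Gaussian-weighted divergence $I_n u=\langle x,u\rangle-\operatorname{div}u$, establish the equality there, and pass to the limit using the lower semicontinuity of $W_1$, the consistency of $I$ under the projections, and the existence of at least one admissible field (for instance $u=D(-L)^{-1}\rho$, where $L=-ID$ is the Ornstein--Uhlenbeck operator and $\int_X\rho\,d\mu=0$), which guarantees that both sides are finite.
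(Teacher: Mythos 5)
Your first half (the inequality $W_1\le\inf$) is correct and is genuinely simpler than the paper's own treatment. The paper proves this inequality by a multi-step approximation scheme (Steps 1--3 of its finite-dimensional lemma): a discretized transport flow $\varphi_k(x)=x+u(x)/\big(m\,\alpha_{k/m}(x)\big)$ with a Taylor estimate, valid first for densities bounded below and smooth $f$, followed by removal of these assumptions via mixing with $\mu$ and Ornstein--Uhlenbeck smoothing, and then a separate passage to infinite dimensions. You get the same inequality in one line from the adjoint relation $\int_X f\,(Iu)\,d\mu=\int_X\langle Df,u\rangle\,d\mu$, the only nontrivial input being the Rademacher theorem for $H$-Lipschitz functionals (Enchev--Stroock; see also \cite{Bogachev}), which places a bounded $d_H$-Lipschitz $f$ in the domain of $D$ with $|Df|\le1$ $\mu$-a.e.; this works directly on the abstract Wiener space, with no finite-dimensional reduction. (Both you and the paper rely on Kantorovich--Rubinstein duality for the extended metric $d_H$, which in infinite dimensions goes back to \cite{FeyelUstunel}.) What the paper's longer route buys is independence from the Rademacher theorem and an explicit transport picture; what yours buys is brevity.

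The reverse inequality is where your proposal has a genuine gap. You correctly identify the obstruction --- Fenchel--Rockafellar cannot be quoted because the set $\{\,|w|\le1\ \mu\text{-a.e.}\,\}$ has empty interior in $L^2(X,\mu;H)$, and the optimal Beckmann flow is a vector measure rather than an $L^2$ vector field --- but your remedy, projecting onto finitely many coordinates, does not remove it: exactly the same obstruction is present in $\mathbb{R}^n$. The classical Beckmann reformulation in $\mathbb{R}^n$ (Ambrosio's lecture notes, which the paper cites) gives $W_1=\|\pi\|_v$ for an $\mathbb{R}^n$-valued measure $\pi$ satisfying the divergence constraint weakly, whereas the theorem requires the infimum of $\int|u|\,d\mu$ over fields $u\in L^2(\mathbb{R}^n,\mu;\mathbb{R}^n)$ solving $Iu=\alpha$ exactly; so ``establish the equality there'' is precisely the statement to be proven, not something one can cite. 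The missing mechanism is the paper's Steps 4--5: mollify the optimal vector measure through the Ornstein--Uhlenbeck semigroup, setting $u_t=e^{-t}\,d(T_t\pi)/d\mu\in L^2(\mu;\mathbb{R}^n)$, which satisfies $Iu_t=T_t\alpha$ and $\int|u_t|\,d\mu\le\|\pi\|_v=W_1$; and then a stability estimate $|\mathcal{N}(\alpha)-\mathcal{N}(\beta)|\le\big(\int(\alpha-\beta)^2d\mu\big)^{1/2}$ for $\mathcal{N}(\alpha)=\inf\{\int|u|\,d\mu:\ Iu=\alpha\}$, proved via the minimal-norm solution operator $v(\alpha)=D\int_0^\infty T_t\alpha\,dt$, which allows one to let $t\to0$ even though the mollified field solves the equation only for the mollified right-hand side. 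Without this regularization-plus-stability argument (or an equivalent one) your finite-dimensional step does not close, and consequently neither does the infinite-dimensional limit.
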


In \eqref{eq_main} $I$ denotes the extended stochastic integral (or divergence operator, see the next section for precise definitions), and the infimum is taken over all vector fields $u:X\to H$ that solve the equation
\begin{equation}
\label{eq_eq}
Iu=\frac{d(\nu_1-\nu_0)}{d\mu}.
\end{equation}

This work was partially motivated by results of \cite{DIRS}  where several  integral representations for functionals from the Gaussian white noise were derived. Namely, for every random variable $\alpha\in L^2(W,\mu)$ the equation
\begin{equation}
\label{int_representation}
\alpha=\int_X \alpha d\mu + Iu
\end{equation}
has infinitely many solutions $u:X\to H.$  In the case of the classical Wiener space with $X=C_0([0,1])$ and $\mu$ being the Wiener measure, there is unique solution $u_0$ of \eqref{int_representation} that is adapted to the natural filtration \cite[Ch. V, \S 3]{RevuzYor}. When $\alpha$ is the probability density, i.e. $\alpha\geq0,$ $\int_X \alpha d\mu=1,$ the representation
$$
\alpha=1 + Iu_0
$$
is connected to the measure transportation via the Girsanov theorem \cite[Ch. VIII, \S 1]{RevuzYor}: the mapping 
$$
T(x)=x(t)-\int^t_0 \frac{u_0(s,x)}{1+I(u_0 1_{\cdot\leq s})(x)}ds
$$
sends the measure $\alpha\cdot \mu$ into the Wiener measure $\mu,$
$$
(\alpha\cdot \mu)\circ T^{-1}=\mu.
$$
Moreover, the mapping $T$ is in a sense optimal \cite{Lehec, Follmer}: for every mapping $S:X\to X,$ such that $S(x)-x\in H$ and $(\alpha\cdot \mu)\circ S^{-1}=\mu,$ one has 
$$
\int_X |T(x)-x|^2\mu(dx)\leq \int_X |S(x)-x|^2\mu(dx).
$$
When $(X,H,\mu)$ is a general abstract Wiener space there is still a connection between measure transport and the equation \eqref{int_representation}. One result in this direction was obtained in \cite{FeyelUstunel}.  It was proved that for sufficiently smooth density $\alpha$ one has
\begin{equation}
\label{FU}
W_1(\alpha\cdot \mu,\mu)\leq \int_X |(1+L)^{-1}D\alpha| d\mu,
\end{equation}
where $D$ denotes the stochastic derivative and $(-L)$ is the generator of the Ornstein-Uhlenbeck semigroup. Our result \eqref{eq_main} generalizes this inequality. Indeed, the identity \cite[Remark 5.8.7]{Bogachev}
$$
ID=L
$$
implies that $(1+L)^{-1}D\alpha$ is a solution to \eqref{int_representation}.

Another motivation for the undertaken research is the study of geodesics on the space $(\mathcal{M}(X),W_1)$ \cite[Ch. 7]{AGS}. In the case $p>1$ the differential structure of spaces $(\mathcal{M}(X),W_p)$ is studied rather detaily and with a number of applications to functional inequalitites \cite{vRS, FSS, N, FN}. The assumption $p>1$ allows to apply powerful technique from convex analysis. In the limit $p\to 1+$ certain results about geodesics in $(\mathcal{M}(X),W_1)$ then can be obtained \cite{N}. However, the distance $W_1$ is not strictly convex.  This results in existence of multiple geodesics between different measures, while the described approximating approach gives results only for  particular $W_1$-geodesics. In general, the behaviour of geodesics in the space $(\mathcal{M}(X),W_1)$ remains unstudied. Proved identity \eqref{eq_main} gives an intrinsic description of the $W_1$-distance between measures. In our further work it will be applied to the study of $W_1-$geodesics between measures on an abstract Wiener space.

\section{Notations and Preliminary Results}

For a detailed exposition of the theory of Gaussian measures on Banach spaces we refer to \cite{Bogachev}. 

A function $f:X\to \mathbb{R}$ will be called a smooth cylindrical function, if it has a representation 
$$
f(x)=\varphi(l_1(x),\ldots,l_d(x)), \ x\in X, 
$$
where $l_1,\ldots,l_d\in X^*,$ $\varphi:\mathbb{R}^d\to \mathbb{R}$ is infinitely differentiable function bounded together with all derivatives.  Denote by $\mathcal{FC}^\infty(X)$ the family of all smooth cylindrical functions. In the finite dimensional case, $\mathcal{FC}^\infty(X)$ coincides with the family of all infinitely differentiable functions bounded together with all derivatives.

Stochastic derivative $D$ is naturally defined for a fuction $f\in \mathcal{FC}^\infty$ with a representation $f(x)=\varphi(l_1(x),\ldots,l_d(x))$:

$$
Df(x)=\sum^d_{i=1}\partial_i \varphi(l_1(x),\ldots,l_d(x)) l_i\in H.
$$

Then $D$ is extended to a closed (unbounded) operator 
$$
D:L^2(X,\mu)\to L^2(X,\mu;H).
$$

Denote by $I$ the adjoint operator to $D,$
$$
I=D^*.
$$
Following \cite{Skorokhod} we will call $I$ the extended stochastic integral. In terms of the integration by parts formula one has the following: for all $f\in \mathcal{FC}^\infty$
$$
\int_X (u,Df) d\mu=\int_X Iu \cdot f d\mu
$$
 \cite[\S 5.8]{Bogachev}.

\begin{rem}
\label{rem1}
In \cite{Bogachev} the operator $(-I)$ is denoted by $\delta$ and is called a divergence operator, while the term ``extended stochastic integral'' is kept for a specific situation when $H$ is an $L^2$-space. Our terminology is chosen to underline the connection between the operator $I$ and integral representations of random variables \eqref{int_representation}.
\end{rem}

The Ornstein-Uhlenbeck semigroup is denoted by $(T_t)_{t\geq 0}:$
$$
T_t h(x)=\int_X h(e^{-t}x+\sqrt{1-e^{-2t}}y)\mu(dy).
$$
For each $p\geq 1$ $(T_t)_{t\geq0}$ is a strongly continuous semigroup of contractions in $L^p(X,\mu)$ \cite{Bogachev}. We will also consider the action of $T_t$ on measures. Given a signed measure $\nu$ on $X,$ define
$$
T_t \nu(A) =\int_X T_t 1_A(x) \nu(dx)=\int_X\int_X 1_A(e^{-t}x+\sqrt{1-e^{-2t}}y)\mu(dy) \nu(dx).
$$
Duality considerations imply that $T_t$ is still a contraction:
$$
\|T_t \nu\|_v\leq \|\nu\|_v,
$$ 
where $\|\cdot\|_v$ denotes the total variation norm.

Among integral representations \eqref{int_representation} of a random variable $\alpha$ there is a unique representation with a minimal $L^2(X,\mu;H)$-norm \cite{DIRS}. In the next lemma the needed properties of this representation are gathered.

\begin{lem} 
\label{lem_min_norm} \cite[L. 6,7]{DIRS}. Define the mapping
$$
v(\alpha)=D \int^\infty_0 T_t \alpha dt, \ \alpha\in L^2(X,\mu).
$$
Then 

\begin{itemize}

\item $v:L^2(X,\mu)\to L^2(X,\mu;H)$ is  a bounded linear operator of norm $1;$

\item for every $\alpha\in L^2(X,\mu),$  $v(\alpha)$ is a solution to \eqref{int_representation}:
$$
\alpha=\int_X \alpha d\mu + Iv(\alpha);
$$
 
\item for any solution $u$ to \eqref{int_representation}, one has
$$
\int_X |v(\alpha)|^2 d\mu\leq \int_X |u|^2 d\mu.
$$ 

\end{itemize}

\end{lem}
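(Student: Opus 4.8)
The plan is to diagonalise every operator involved through the Wiener--It\^o chaos decomposition $L^2(X,\mu)=\bigoplus_{n\ge 0}\mathcal{H}_n$, on which the number operator $L=ID$ acts as multiplication by $n$ on $\mathcal{H}_n$, so that $T_t=e^{-tL}$ acts as multiplication by $e^{-nt}$. Writing $\alpha=\sum_{n\ge 0}\alpha_n$ with $\alpha_0=\bar\alpha:=\int_X\alpha\,d\mu$, one has $T_t\alpha_n=e^{-nt}\alpha_n$, so on the orthogonal complement of the constants the Bochner integral $\int_0^\infty T_t(\alpha-\bar\alpha)\,dt$ converges and equals $\sum_{n\ge1}\frac1n\alpha_n$; since $D$ annihilates constants, $v(\alpha)=D\int_0^\infty T_t\alpha\,dt=DL^{-1}(\alpha-\bar\alpha)$, where $L^{-1}$ denotes the pseudo-inverse acting as $1/n$ on $\mathcal{H}_n$, $n\ge1$. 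A preliminary step is to check that $L^{-1}(\alpha-\bar\alpha)$ lies in $\mathrm{Dom}(D)=\mathrm{Dom}(L^{1/2})$, which holds because $\sum_{n\ge1}n\cdot n^{-2}\|\alpha_n\|^2=\sum_{n\ge1}n^{-1}\|\alpha_n\|^2\le\|\alpha\|^2<\infty$; the clean tool for commuting $D$ past the time integral is the Mehler commutation relation $DT_t=e^{-t}T_tD$.

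For the first property I would use the identity $\|Df\|^2_{L^2(X,\mu;H)}=\langle D^*Df,f\rangle=\langle Lf,f\rangle$, valid for $f\in\mathrm{Dom}(L)$. Applying it to $f=L^{-1}(\alpha-\bar\alpha)$ gives
\[
\int_X|v(\alpha)|^2\,d\mu=\langle L\,L^{-1}(\alpha-\bar\alpha),\,L^{-1}(\alpha-\bar\alpha)\rangle=\langle\alpha-\bar\alpha,\,L^{-1}(\alpha-\bar\alpha)\rangle=\sum_{n\ge1}\frac1n\|\alpha_n\|^2.
\]
This is bounded by $\sum_{n\ge1}\|\alpha_n\|^2=\|\alpha-\bar\alpha\|^2\le\|\alpha\|^2$, so $v$ is bounded with norm at most $1$; testing on a nonzero element of the first chaos $\mathcal{H}_1$ (where $L=1$) yields equality, so the norm is exactly $1$. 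Linearity is immediate from that of $D$, $T_t$ and the integral.

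For the second property, once $L^{-1}(\alpha-\bar\alpha)\in\mathrm{Dom}(L)=\mathrm{Dom}(D^*D)$ (which follows from $\sum_{n\ge1}\|\alpha_n\|^2<\infty$), the representation is a one-line computation: $Iv(\alpha)=D^*DL^{-1}(\alpha-\bar\alpha)=LL^{-1}(\alpha-\bar\alpha)=\alpha-\bar\alpha$, hence $\alpha=\bar\alpha+Iv(\alpha)$. For the third (minimality) property I would argue by orthogonal projection: if $u$ is any solution of \eqref{int_representation}, then $Iu=\alpha-\bar\alpha=Iv(\alpha)$, so $u-v(\alpha)\in\ker I=\ker D^*=(\mathrm{Range}\,D)^{\perp}$. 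Since $v(\alpha)=DL^{-1}(\alpha-\bar\alpha)\in\mathrm{Range}\,D$, the vectors $v(\alpha)$ and $u-v(\alpha)$ are orthogonal in $L^2(X,\mu;H)$, and Pythagoras gives $\int_X|u|^2\,d\mu=\int_X|v(\alpha)|^2\,d\mu+\int_X|u-v(\alpha)|^2\,d\mu\ge\int_X|v(\alpha)|^2\,d\mu$.

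The main obstacle is not any of these short algebraic identities but the functional-analytic bookkeeping that makes them legitimate: establishing convergence of $\int_0^\infty T_t\alpha\,dt$ modulo constants, verifying the successive domain memberships $L^{-1}(\alpha-\bar\alpha)\in\mathrm{Dom}(D)$ and $\in\mathrm{Dom}(D^*D)$ so that the closed operators $D$ and $I=D^*$ may be composed freely, and justifying the interchange of $D$ with the time integral via $DT_t=e^{-t}T_tD$. All of this is cleanest when phrased through the chaos decomposition, where each operator is diagonal and the estimates reduce to elementary comparisons of the series $\sum_n n^{-1}\|\alpha_n\|^2$ and $\sum_n\|\alpha_n\|^2$.
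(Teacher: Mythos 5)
Your proof is correct, but there is nothing in the paper to compare it against: the paper does not prove this lemma at all, it simply quotes it from \cite[L. 6,7]{DIRS}. What you have produced is therefore a self-contained verification of a cited result, and your spectral route is the natural one: diagonalizing $D$, $I=D^{*}$, $L=ID$ and $T_t=e^{-tL}$ through the Wiener--It\^o chaos decomposition, so that $v(\alpha)=DL^{-1}(\alpha-\bar\alpha)$, after which the three assertions follow exactly as you state them --- the norm bound from $\int_X|v(\alpha)|^2d\mu=\sum_{n\geq1}n^{-1}\|\alpha_n\|^2\leq\|\alpha\|^2$ with equality on the first chaos, the representation from $Iv(\alpha)=LL^{-1}(\alpha-\bar\alpha)=\alpha-\bar\alpha$, and minimality from the Pythagorean splitting $u=v(\alpha)+(u-v(\alpha))$ with $u-v(\alpha)\in\ker D^{*}=(\mathrm{Range}\,D)^{\perp}$ and $v(\alpha)\in\mathrm{Range}\,D$. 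Your domain bookkeeping ($L^{-1}(\alpha-\bar\alpha)\in\mathrm{Dom}(L)$ because $\sum_{n\geq1}\|\alpha_n\|^2<\infty$) is also in order. The one point deserving a more careful formulation is the definition itself: when $\int_X\alpha\,d\mu\neq0$ the Bochner integral $\int_0^\infty T_t\alpha\,dt$ diverges, since the constant mode does not decay; so $v(\alpha)$ must be read either as $\lim_{T\to\infty}D\int_0^T T_t\alpha\,dt$ or as $D\int_0^\infty T_t(\alpha-\bar\alpha)\,dt$. Your phrase ``since $D$ annihilates constants'' is the right idea, but as written it applies $D$ to a possibly divergent integral; state the mean-subtracted (or limiting) interpretation explicitly, note that the spectral gap gives $\|T_t(\alpha-\bar\alpha)\|_{L^2}\leq e^{-t}\|\alpha-\bar\alpha\|_{L^2}$ and hence absolute convergence, and the argument is complete.
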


\section{Finite Dimensional Case}

In this section we prove a partial case of the theorem \ref{thm_main}. Assume that $X$ is a finite dimensional space, $X=\mathbb{R}^n,$ and that $\mu$ is a standard Gaussian measure. In this case $H=\mathbb{R}^n$ and $|\cdot|$ is the Euclidean norm.

\begin{lem}
\label{lem_finite_dim}
For Borel probability measures $\nu_0,\nu_1$  on $\mathbb{R}^n,$ such that
$$
\nu_1-\nu_0\ll \mu, \ \frac{d(\nu_1-\nu_0)}{d\mu}\in L^2(\mathbb{R}^n,\mu),
$$
the representation \eqref{eq_main} holds:
$$
W_1(\nu_0,\nu_1)=\inf_{Iu=\frac{d(\nu_1-\nu_0)}{d\mu}}\bigg\{\int_{\mathbb{R}^n} |u(x)|\mu(dx) \bigg\}.
$$

\end{lem}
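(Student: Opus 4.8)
The plan is to combine the Kantorovich--Rubinstein duality with a convex-duality (Fenchel--Rockafellar / minimax) argument, whose geometric content is the Beckmann formulation of $W_1$. Throughout write $g=\frac{d(\nu_1-\nu_0)}{d\mu}$ and recall the dual description $W_1(\nu_0,\nu_1)=\sup\{\int_{\mathbb{R}^n}f\,d(\nu_1-\nu_0):\operatorname{Lip}(f)\le1\}=\sup\{\int_{\mathbb{R}^n}fg\,d\mu:\operatorname{Lip}(f)\le1\}$. Two preliminary observations fix the setting. First, since $\nu_0,\nu_1$ are probability measures, $\int g\,d\mu=0$, so Lemma \ref{lem_min_norm} produces an honest solution $v(g)$ of $Iv(g)=g$ of finite $L^2$-norm; hence the feasible set in \eqref{eq_main} is the non-empty affine subspace $v(g)+\ker I\subset L^2(\mathbb{R}^n,\mu;\mathbb{R}^n)$, and the task is to minimise the $L^1(\mu)$-norm over it. Second, in the finite-dimensional Gaussian case the integration-by-parts formula reads $Iu(x)=\langle x,u(x)\rangle-\operatorname{div}u(x)$, so writing $v=u\,p$ with $p$ the Gaussian density converts the constraint $Iu=g$ into $\operatorname{div}v=\nu_0-\nu_1$ (distributionally) and the cost into $\int|u|\,d\mu=\int|v|\,dx$. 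This is exactly the Beckmann energy, and it is this reformulation that explains why the infimum equals $W_1$.

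The inequality $W_1\le\inf$ is immediate: for any $u$ with $Iu=g$ and any smooth $1$-Lipschitz $f$, the integration-by-parts identity gives $\int fg\,d\mu=\int\langle u,Df\rangle\,d\mu\le\int|Df|\,|u|\,d\mu\le\int|u|\,d\mu$, since $|Df|\le\operatorname{Lip}(f)\le1$. Approximating a general $1$-Lipschitz $f$ by smooth cylindrical functions and then taking the supremum over $f$ and the infimum over $u$ yields $W_1\le\inf_{Iu=g}\int|u|\,d\mu$.

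For the reverse inequality I would run the formal computation
\[
\inf_{Iu=g}\int|u|\,d\mu=\inf_{u}\sup_{\lambda}\Big[\int|u|\,d\mu-\int\langle u,D\lambda\rangle\,d\mu+\int g\lambda\,d\mu\Big],
\]
where $\lambda$ runs over $\mathcal{FC}^\infty$ and the inner supremum encodes the constraint $Iu=g$. After exchanging the infimum and supremum, the inner minimisation over $u$ is pointwise, $\inf_{w}(|w|-\langle D\lambda,w\rangle)$, which equals $0$ when $|D\lambda|\le1$ and $-\infty$ otherwise; one is left with $\sup\{\int g\lambda\,d\mu:|D\lambda|\le1\}=W_1$. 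The whole proof therefore reduces to justifying this exchange, that is, to the absence of a duality gap.

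That justification is the main obstacle, and I see two routes. The functional-analytic route applies the Fenchel--Rockafellar theorem to $F(u)=\int|u|\,d\mu$ and $G=\iota_{\{g\}}\circ I$, checking the constraint qualification (feasibility with finite cost, guaranteed by $v(g)$) and computing the conjugates $F^*=\iota_{\{|\cdot|\le1\}}$ and $I^*=D$; equivalently it is the $L^1$--$L^\infty$ duality $\inf_{w\in\ker I}\|v(g)+w\|_{L^1(\mu)}=\sup\{\int\langle\phi,v(g)\rangle\,d\mu:|\phi|\le1,\ \phi\perp\ker I\}$. Since $\ker I=(\overline{\operatorname{ran}D})^{\perp}$, the admissible $\phi$ are the $L^\infty$-bounded elements of $\overline{\operatorname{ran}D}$, and the delicate point is to identify these with gradients $Df$ of $1$-Lipschitz $f$ (a ``curl-free $\Rightarrow$ gradient'' statement together with the compactness needed to pass from an $L^2$-limit of gradients to a genuine Lipschitz potential); once this is done, $\int\langle Df,v(g)\rangle\,d\mu=\int fg\,d\mu$ closes the loop with Kantorovich duality. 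The geometric route instead builds an explicit competitor from an optimal plan $\pi$: the transport density $v$ satisfies $\operatorname{div}v=\nu_0-\nu_1$ and $\int|v|\,dx=W_1$, and $u=v/p$ is the candidate solution. Here the obstacle is integrability, since one must ensure that $u$ lies in the domain of $I$ (essentially $u\in L^2(\mu)$); as $|v|^2/p$ may fail to be integrable, this forces an appeal to transport-density regularity together with a prior smoothing of $g$ (naturally via the Ornstein--Uhlenbeck semigroup, whose contraction property on measures was recorded above) and a limiting argument.
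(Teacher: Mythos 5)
Your first inequality, $W_1\le\inf$, is correct and is in fact more direct than the paper's own treatment: since a bounded $1$-Lipschitz function on $\mathbb{R}^n$ lies in the domain of the closed operator $D$ with $|Df|\le 1$ $\mu$-a.e. (Rademacher plus mollification in the graph norm), the adjointness $\int f\,Iu\,d\mu=\int( Df,u)\,d\mu$ applies verbatim and gives \eqref{eq_1} at once. The paper instead reaches this inequality through a discretized transport flow $\varphi_k(x)=x+u(x)/\bigl(m\,\alpha_{k/m}(x)\bigr)$, which requires densities bounded below, and then needs two further approximation steps (mixing with $\varepsilon\mu$, Ornstein--Uhlenbeck smoothing of the measures) to remove that assumption; your argument bypasses all of this. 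Your Beckmann reformulation ($v=u\,p$, $\operatorname{div}v=\nu_0-\nu_1$, $\int|u|\,d\mu=\int|v|\,dx$) is also correct and correctly identifies the geometric content of the statement.

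The genuine gap is the reverse inequality $\inf_{Iu=g}\int|u|\,d\mu\le W_1(\nu_0,\nu_1)$: your proposal never proves it. You reduce it to the absence of a duality gap in the minimax exchange, offer two routes, and explicitly leave the decisive step of each open. In the functional-analytic route the unresolved point is exactly the one you flag yourself: one must show that every $\phi\in\overline{\operatorname{ran}D}$ with $|\phi|\le1$ a.e.\ is the gradient of a $1$-Lipschitz potential; without this the exchange proves nothing. (In the finite-dimensional Gaussian setting this can actually be closed: the Poincar\'e inequality $\|f-\int f\,d\mu\|_{L^2(\mu)}\le\|Df\|_{L^2(\mu)}$ shows that $\operatorname{ran}D$ restricted to mean-zero functions is $L^2$-closed, so $\phi=Df$ for some $f\in\operatorname{dom}D$, and a Sobolev function with $|Df|\le1$ a.e.\ has a $1$-Lipschitz version; combined with the Hahn--Banach distance formula in $L^1$--$L^\infty$ duality and the feasibility $Iv(g)=g$ this finishes the proof --- but none of this appears in your text.) Your geometric route is, in substance, the paper's actual proof: Step 5 of the paper invokes Ambrosio's Riesz--Markov--Kakutani argument producing a vector-valued measure $\pi$ with $\int(Df,d\pi)=\int f\,d(\nu_1-\nu_0)$ and $\|\pi\|_v=W_1(\nu_0,\nu_1)$ --- precisely the Beckmann object you describe --- and resolves the integrability obstacle you point out by smoothing: $u_t=e^{-t}\,\frac{dT_t\pi}{d\mu}$ solves $Iu_t=T_t\frac{d(\nu_1-\nu_0)}{d\mu}$ with $\int|u_t|\,d\mu\le\|\pi\|_v$, and the smoothing is then removed using the $L^2$-Lipschitz continuity of both sides of \eqref{eq_main} in the density (the paper's Step 4, which rests on the minimal-norm solution operator of Lemma \ref{lem_min_norm}); you mention ``a limiting argument'' but never set up this continuity. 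Identifying the obstacles is not the same as overcoming them: as written, the proposal establishes only one half of Lemma \ref{lem_finite_dim}.
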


\begin{proof}

{\it Step 1.} Assume a stronger condition of absolute continuity: for $i=0,1$  
$$
\nu_i\ll\mu, \ \inf\frac{d\nu_i}{d\mu}\geq \varepsilon >0.
$$

The well-known Kantorovich--Rubinstein theorem \cite[Th. 1.14]{Villani} states that  
$$
W_1(\nu_0,\nu_1)=\sup\bigg\{\int_{\mathbb{R}^n} f d(\nu_1-\nu_0)\bigg\},
$$
where the supremum is taken  over all bounded $1-$Lipschitz functions $f:\mathbb{R}^n\to\mathbb{R}.$  
Hence, to prove $\leq$ in the representation \eqref{eq_main}, it is enough to check the inequality
\begin{equation}
\label{eq_1}
\bigg|\int_{\mathbb{R}^n} f d\nu_0 -\int_{\mathbb{R}^n} f d\nu_1 \bigg|\leq \int_{\mathbb{R}^n} |u| d\mu,
\end{equation}
where $f:\mathbb{R}^n\to\mathbb{R}$  is a bounded $1-$Lipschitz functions  and $u\in  L^2(\mathbb{R}^n,\mu; \mathbb{R}^n)$ satisfies \eqref{eq_eq}:
$$
Iu=\frac{d(\nu_1-\nu_0)}{d\mu}.
$$
At first we prove \eqref{eq_1} under additional smoothness assumption on $f.$

\begin{lem}
\label{lemma_finite_dim}
Inequality \eqref{eq_1} holds for all bounded twice continuously differentiable functions $f:\mathbb{R}^n\to\mathbb{R},$ such that
$$
\sup_{x\in \mathbb{R}^n}
|Df(x)|\leq 1, \ \sup_{x\in \mathbb{R}^n}|D^2f(x)|<\infty.
$$
\end{lem}

\begin{proof}

Consider the flow of measures 
$$
\nu_t=\nu_0+t(\nu_1-\nu_0), \ 0\leq t\leq 1.
$$
Assumptions of the Step 1 imply that all measures $\nu_t$ are absolutely continuous with respect to $\mu.$ Denote by $\alpha_t$ their densities: $\alpha_t=\frac{d\nu_t}{d\mu}.$ Again, by assumptions, 
$$
\alpha_t \geq \varepsilon.
$$
In terms of densities $\alpha_t,$ the vector field $u$ is a solution of
$$
Iu=\alpha_1-\alpha_0.
$$
Hence, for $s<t$
\begin{equation}
\label{eq_2}
\alpha_t=\alpha_s+(t-s)Iu.
\end{equation}
Next we define transformations of $\mathbb{R}^n$ that allow to control the distance $W_1(\nu_{\frac{k}{m}},\nu_{\frac{k+1}{m}}).$ Let $m\geq 1$ be fixed. For every $k=0,1,\ldots,m-1$ define the mapping
$$
\varphi_k(x)=x+\frac{u(x)}{m\alpha_{\frac{k}{m}}(x)}, \ x\in\mathbb{R}^n.
$$
To compare integrals of $f$ with respect to $\nu_{\frac{k+1}{m}}$ and $\nu_{\frac{k}{m}}\circ \varphi^{-1}_k$ we use first order Taylor approximation:
$$
f(\varphi_k(x))=f(x)+\frac{(Df(x),u(x))}{m\alpha_{\frac{k}{m}}(x)}+r(x), 
$$
where the remainder $r(x)$ satisfies
$$
|r(x)|\leq C \frac{|u(x)|^2}{m^2 \alpha^2_{\frac{k}{m}}(x)}\leq C \frac{|u(x)|^2}{m^2 \varepsilon \alpha_{\frac{k}{m}}(x)}.
$$
Using the relation \eqref{eq_2} and the integration by parts, we can compare integrals:
\begin{equation}
\label{eq_3}
\begin{gathered}
\bigg|\int_{\mathbb{R}^n} f(\varphi_k(x)) \nu_{\frac{k}{m}}(dx) -\int_{\mathbb{R}^n} f(x) \nu_{\frac{k+1}{m}}(dx) \bigg|= \\
=\bigg|\int_{\mathbb{R}^n}\bigg( f(x)+\frac{(Df(x),u(x))}{m\alpha_{\frac{k}{m}}(x)}+r(x)\bigg) \alpha_{\frac{k}{m}}(x)\mu(dx) -\int_{\mathbb{R}^n} f(x) \bigg(\alpha_{\frac{k}{m}}(x)+\frac{1}{m}Iu(x)\bigg)\mu(dx) \bigg|\leq \\
\leq \frac{C}{m^2 \varepsilon}\int_{\mathbb{R}^n}|u|^2 d\mu.
\end{gathered}
\end{equation}

The comparison of  $\nu_{\frac{k}{m}}$ and $\nu_{\frac{k}{m}}\circ \varphi^{-1}_k$ is simpler.
\begin{equation}
\label{eq_4}
\begin{gathered}
\bigg|\int_{\mathbb{R}^n} f(\varphi_k(x)) \nu_{\frac{k}{m}}(dx) -\int_{\mathbb{R}^n} f(x) \nu_{\frac{k}{m}}(dx) \bigg|\leq \\
\leq \int_{\mathbb{R}^n} \bigg|f\bigg(x+\frac{u(x)}{m\alpha_{\frac{k}{m}}(x)}\bigg)-f(x)\bigg| \alpha_{\frac{k}{m}}(x)\mu(dx)\leq\frac{1}{m} \int_{\mathbb{R}^n} |u|d\mu.
\end{gathered}
\end{equation}

Combine \eqref{eq_3} and \eqref{eq_4}.
$$
\bigg|\int_{\mathbb{R}^n} f d\nu_0 -\int_{\mathbb{R}^n} f d\nu_1 \bigg|\leq 
$$
$$
\leq \sum^{m-1}_{k=0} \bigg(\bigg|\int_{\mathbb{R}^n} f(\varphi_k(x)) \nu_{\frac{k}{m}}(dx) -\int_{\mathbb{R}^n} f(x) \nu_{\frac{k}{m}}(dx) \bigg|+
$$
$$
+\bigg|\int_{\mathbb{R}^n} f(\varphi_k(x)) \nu_{\frac{k}{m}}(dx) -\int_{\mathbb{R}^n} f(x) \nu_{\frac{k+1}{m}}(dx) \bigg|\bigg)\leq
$$
$$
\leq  \int_{\mathbb{R}^n} |u|d\mu + \frac{C}{m \varepsilon}\int_{\mathbb{R}^n}|u|^2 d\mu.
$$
It remains to let $m\to \infty.$ The lemma is proved.

\end{proof}

\begin{rem} In \cite{FeyelUstunel} densities $\alpha_t$ are assumed to be smooth enough for a flow of solutions of a differential equation
$$
d\varphi_t(x)=\frac{(1+L)^{-1}D(\alpha_1-\alpha_0)(x)}{\alpha_t(x)}dt
$$ 
to exist. As shown in the proof of the lemma \ref{lemma_finite_dim} only the discretized version of such flow is needed to handle the case with rather general densities and arbitrary $u$ at the place of $(1+L)^{-1}D(\alpha_1-\alpha_0).$

\end{rem}

Let $f:\mathbb{R}^n\to\mathbb{R}$ be bounded $1-$Lipschitz function. Consider an application of the Ornstein-Uhlenbeck semigroup to $f:$
$$
f_t(x)=T_tf(x).
$$
Then for each $t>0$ $f_t$ satisfies conditions of the lemma \ref{lemma_finite_dim}. The strong coninuity of $(T_t)$ implies
$$
\int_{\mathbb{R}^n} (f_t-f)^2d\mu \to 0, \ t\to 0.
$$
Using the assumption $\frac{d(\nu_1-\nu_0)}{d\mu}\in L^2(\mathbb{R}^n,\mu)$ we get the following.
$$
\bigg|\int_{\mathbb{R}^n} f d\nu_0 -\int_{\mathbb{R}^n} f d\nu_1 \bigg|=\bigg|\int_{\mathbb{R}^n} f \frac{d(\nu_1-\nu_0)}{d\mu} d\mu \bigg|=
$$
$$
=\lim_{t\to 0}\bigg|\int_{\mathbb{R}^n} f_t d\nu_0 -\int_{\mathbb{R}^n} f_t d\nu_1 \bigg|\leq \int_{\mathbb{R}^n} |u| d\mu.
$$

For all measures $\nu_0,\nu_1\ll \mu,$ $\inf \frac{d\nu_0}{d\mu}>0,\inf \frac{d\nu_1}{d\mu}>0$ and all solutions $u$ of \eqref{eq_eq}, the inequality 
$$
W_1(\nu_0,\nu_1)\leq \int_{\mathbb{R}^n}|u|d\mu
$$
is proved.

{\it Step 2.} The case $\nu_0,\nu_1\ll \mu$ reduces to the previous one by transformation
$$
\nu_{i,\varepsilon}=\frac{\nu_i+\varepsilon\mu}{1+\varepsilon}, \ i=0,1.
$$
Indeed, for any $u,$ such that  
$$
Iu=\frac{d(\nu_1-\nu_0)}{d\mu},
$$
one has 
$$
I\bigg(\frac{u}{1+\varepsilon}\bigg)=\frac{d(\nu_{1,\varepsilon}-\nu_{0,\varepsilon})}{d\mu},
$$
and, by the result of step 1,
$$
W_1(\nu_0,\nu_1)=(1+\varepsilon)W_1(\nu_{0,\varepsilon},\nu_{1,\varepsilon})\leq(1+\varepsilon)\int_{\mathbb{R}^n}\frac{|u|}{1+\varepsilon}d\mu=\int_{\mathbb{R}^n}|u|d\mu.
$$

{\it Step 3.} We prove inequality $\leq$ in \eqref{eq_main} without any additional assumptions. Consider probability measures $\nu_0,\nu_1$ on $\mathbb{R}^n,$ such that 
$$
\nu_1-\nu_0\ll \mu, \ \frac{d(\nu_1-\nu_0)}{d\mu}\in L^2(\mathbb{R}^n,\mu).
$$
Define measures 
$$
\nu_{i,t}=T_t\nu_i, \ i=0,1.
$$
Then $\nu_{i,t}\ll \mu$ with the density
$$
\frac{d\nu_{i,t}}{d\mu}(x)=(1-e^{-2t})^{-\frac{n}{2}}\int_{\mathbb{R}^n}e^{-\frac{|x|^2-2e^t(z,x)+|z|^2}{2(e^{2t}-1)}}\nu_i(dz).
$$
Symmetry of $T_t$ in $L^2(X,\mu)$ implies the relation
$$
\frac{d(T_t\nu_1-T_t\nu_0)}{d\mu}=T_t\frac{d(\nu_1-\nu_0)}{d\mu}.
$$
Hence, for each bounded $1-$Lipschitz continuous function $f:\mathbb{R}^n\to\mathbb{R}$ one has
$$
\begin{gathered}
\bigg|\int_{\mathbb{R}^n} f (d\nu_1-d\nu_0) - \int_{\mathbb{R}^n} f (d\nu_{1,t}-d\nu_{0,t}) \bigg|=\bigg|\int_{\mathbb{R}^n} f\bigg(\frac{d(\nu_1-\nu_0)}{d\mu}-T_t\frac{d(\nu_1-\nu_0)}{d\mu}\bigg) d\mu\bigg|\leq \\
\leq \sqrt{n\int_{\mathbb{R}^n}\bigg(\frac{d(\nu_1-\nu_0)}{d\mu}-T_t\frac{d(\nu_1-\nu_0)}{d\mu}\bigg)^2 d\mu}.
\end{gathered}
$$
In particular,
\begin{equation}
\label{eq_l2_ineq}
|W_1(\nu_{0,t},\nu_{1,t})- W_1(\nu_{0},\nu_{1})| \leq \sqrt{n\int_{\mathbb{R}^n}\bigg(\frac{d(\nu_1-\nu_0)}{d\mu}-T_t\frac{d(\nu_1-\nu_0)}{d\mu}\bigg)^2 d\mu}.
\end{equation}
Given a solution $u$ of \eqref{eq_eq} it is easily verified that $e^{-t}T_t u$ satisfies
$$
I(e^{-t} T_t u)=\frac{d(\nu_{1,t}-\nu_{0,t})}{d\mu}.
$$
Hence, by the result of the step 2
$$
W_1(\nu_{0},\nu_{1})=\lim_{t\to 0}W_1(\nu_{0,t},\nu_{1,t})\leq \lim_{t\to 0} \int_{\mathbb{R}^n}e^{-t} |T_t u| d\mu=\int_{\mathbb{R}^n}|u| d\mu.
$$
The inequality $\leq$ is proved.

{\it Step 4.} At this step we observe that both sides of \eqref{eq_main} are continuous functions of the density  $\alpha=\frac{d(\nu_1-\nu_0)}{d\mu}$ (when the distance between densities is the $L^2(\mathbb{R}^n,\mu)$-distance). For the left-hand side it was checked at the step 3 (see \eqref{eq_l2_ineq}). For the right-hand side this follows from the existence of the minimal norm representation operator $v$ (see lemma \ref{lem_min_norm}). Denote the right-hand side of \eqref{eq_main} by $\mathcal{N}:$
$$
\mathcal{N}(\alpha)=\inf_{Iu=\alpha}\bigg\{\int_{\mathbb{R}^n} |u| d\mu \bigg\}.
$$
Consider two densitites
$$
\alpha, \beta \in L^2(\mathbb{R}^n,\mu), \int \alpha d\mu=\int\beta d\mu=0.
$$
For each solution $u$ of $Iu=\alpha,$ one has
$$
I(u+v(\beta-\alpha))=\beta.
$$
Hence, by the properties of $v,$
$$
\mathcal{N}(\beta)\leq \int_{\mathbb{R}^n} |u+v(\beta-\alpha)| d\mu \leq \int_{\mathbb{R}^n} |u|d\mu +\sqrt{\int_{\mathbb{R}^n} |v(\beta-\alpha)|^2 d\mu}
$$
$$
\leq\int_{\mathbb{R}^n} |u|d\mu +\sqrt{\int_{\mathbb{R}^n} (\beta-\alpha)^2 d\mu}.
$$
Taking infimum in $u$ and repeating the argument we get inequality
$$
|\mathcal{N}(\alpha)-\mathcal{N}(\beta)|\leq \sqrt{\int_{\mathbb{R}^n} (\beta-\alpha)^2 d\mu}.
$$

{\it Step 5.} In \cite[Proof of Prop. 4.1]{Ambrosio} the following consequence of the Riesz--Markov--Kakutani representation theorem is derived: there exists an $\mathbb{R}^n-$valued Borel measure $\pi$ on $\mathbb{R}^n,$ such that 

\begin{enumerate}
\item
for all $f\in \mathcal{FC}^\infty$ one has
$$
\int_{\mathbb{R}^n} (Df,d\pi)=\int_{\mathbb{R}^n} f d(\nu_1-\nu_0);
$$

\item $W_1(\nu_0,\nu_1)$ coincides with the total variation of $\pi:$
$$
W_1(\nu_0,\nu_1)=\|\pi\|_v=\bigg(\sum^n_{i=1}\|\pi_i\|^2_v\bigg)^{\frac{1}{2}}.
$$

\end{enumerate}

From the symmetry of the Ornstein-Uhlenbeck semigroup the following relations follow.
$$
\int_{\mathbb{R}^n} f T_t\frac{d(\nu_1-\nu_0)}{d\mu}d\mu=\int_{\mathbb{R}^n} T_tf d(\nu_1-\nu_0)=
$$
$$
=\int_{\mathbb{R}^n} (DT_tf,d\pi)=e^{-t}\int_{\mathbb{R}^n} (T_tDf,d\pi)=\int_{\mathbb{R}^n} \bigg(Df,e^{-t}\frac{dT_t\pi}{d\mu}\bigg)d\mu.
$$
In other words,
$$
I\bigg(e^{-t}\frac{dT_t\pi}{d\mu}\bigg)=T_t\frac{d(\nu_1-\nu_0)}{d\mu}.
$$
In particular,
$$
\inf_{Iu=T_t\frac{d(\nu_1-\nu_0)}{d\mu}}\bigg\{\int_X |u(x)|\mu(dx) \bigg\}\leq \int_{\mathbb{R}^n} \bigg|e^{-t}\frac{dT_t\pi}{d\mu}\bigg|\mu(dx) \leq W_1(\nu_0,\nu_1).
$$
From the continuity proved at the step 4, the left-hand side in the preceeding inequality converges to the  
$$
\inf_{Iu=\frac{d(\nu_1-\nu_0)}{d\mu}}\bigg\{\int_X |u(x)|\mu(dx) \bigg\}.
$$
This gives the inequality $\geq$ in \eqref{eq_main} and finishes the proof.
\end{proof}

\section{Infinite Dimensional Case. Proof of the theorem \ref{thm_main}}

In this section we give the proof of the theorem \ref{thm_main} in the infinite dimensional case via the reduction to the finite dimensional case considered in the previous section. The same considerations (without limitting procedure) work when $X$  is finite dimensional but $\mu$ is not necessarily standard Gaussian measure. 

Throughout the section we assume that $X$ is an infinite dimensional separable Banach space, $\mu$ is a centered Gaussian measure on $X$ with $\mbox{supp}\mu=X.$ Then the Cameron-Martin space $H$ is an infinite-dimensional separable Hilbert space, which is densely and continuously embedded in $X.$ In particular, there is an orthonormal basis $\{e_n\}_{n\geq 1}$ in $H,$ such that $e_n\in X^*.$ 

\begin{proof}

Introduce ``projection operators'' $P_n:X\to \mathbb{R}^n,$
$$
P_n(x)=(e_1(x),\ldots,e_n(x)), \ x\in X,
$$
and finite-dimensional ``approximations'' 
$$
\mu^{(n)}=\mu\circ P^{-1}_n, \ \nu^{(n)}_i=\nu_i\circ P^{-1}_n, i=0,1.
$$
Following the notation from the section 3, we will denote the right-hand side of 
\eqref{eq_main} by $\mathcal{N}:$
$$
\mathcal{N}(\alpha)=\inf_{u\in L^2(X,\mu;H), Iu=\alpha}\bigg\{\int_{X} |u| d\mu \bigg\},
$$
$$
\alpha \in L^2(X,\mu), \int_X \alpha d\mu=0.  
$$
Analogous quantities for finite-dimensional spaces $\mathbb{R}^n$ will be denoted by $\mathcal{N}^{(n)}:$
$$
\mathcal{N}^{(n)}(\alpha)=\inf_{u\in L^2(\mathbb{R}^n,\mu^{(n)};\mathbb{R}^{n}), Iu=\alpha}\bigg\{\int_{\mathbb{R}^n} |u| d\mu^{(n)} \bigg\},
$$
$$
\alpha \in L^2(\mathbb{R}^n,\mu^{(n)}), \int_{\mathbb{R}^n} \alpha d\mu^{(n)}=0. 
$$

{\it Step 1.} Denote $\alpha=\frac{d(\nu_1-\nu_0)}{d\mu}.$ Let $\alpha_n$ be the conditional expectation of $\alpha$ with respect to $P_n:$
\begin{equation}
\label{eq_44}
\alpha_n=\mathbb{E}_\mu[\alpha|P_n].
\end{equation}
Then 
$$
\alpha_n(x)=\alpha^{(n)}(P_n(x)),
$$
where 
$$
\alpha^{(n)} \in L^2(\mathbb{R}^n,\mu^{(n)}), \int_{\mathbb{R}^n} \alpha^{(n)} d\mu^{(n)}=0. 
$$
At this step we prove that 
\begin{equation}
\label{eq_41}
\mathcal{N}^{(n)}(\alpha^{(n)})=\mathcal{N}(\alpha_n).
\end{equation}
As a consequence, using results of the step 4 of the proof of lemma \ref{lem_finite_dim}, we obtain the convergence
$$
\mathcal{N}^{(n)}(\alpha^{(n)})\to \mathcal{N}(\alpha), \ n\to \infty.
$$

To check \eqref{eq_41}, consider $u\in L^2(X,\mu;H),$ such that $Iu=\alpha_n.$ Define $u^{(n)}\in L^2(\mathbb{R}^n,\mu^{(n)};\mathbb{R}^{(n)})$ as follows
$$
\mathbb{E}_\mu[(u,e_i)|P_n](x)=u^{(n)}_i(P_n(x)), \ i=1,\ldots,n.
$$
Then, in the space $L^2(\mathbb{R}^n,\mu^{(n)}),$ one has
\begin{equation}
\label{eq_42}
Iu^{(n)}=\alpha^{(n)}.
\end{equation}
Indeed, consider a function $\beta^{(n)}\in\mathcal{FC}^\infty$ and a corresponding function 
$$
\beta(x)=\beta^{(n)}(P_n(x)).
$$
Following relations follow from the inclusion  $D\beta(x)\in \mbox{span}(e_1,\ldots,e_n)$ and the fact that $D\beta$ is $\sigma(P_n)-$measurable:
$$
\int_{\mathbb{R}^n} \beta^{(n)} \alpha^{(n)} d\mu^{(n)}=
\int_X \beta \alpha_n d\mu =\int_X \beta Iu d\mu =
$$
$$
=\int_X (D\beta,u)d\mu= \int_X \bigg(D\beta, \sum^n_{i=1} (u,e_i)e_i\bigg)d\mu=\int_{\mathbb{R}^n} (D\beta^{(n)},u^{(n)})d\mu^{(n)}.
$$
Equation \eqref{eq_42} implies that
$$
\mathcal{N}^{(n)}(\alpha^{(n)})\leq \int_{\mathbb{R}^n} |u^{(n)}|d\mu^{(n)}\leq \int_X |u|d\mu.
$$
This proves $\leq$ in \eqref{eq_41}. To check the reversed inequality, it is enough to consider $u^{(n)}\in L^2(\mathbb{R}^n,\mu^{(n)};\mathbb{R}^{(n)}),$ such that $Iu^{(n)}=\alpha^{(n)},$ define $u\in L^2(X,\mu;H)$ as
$$
u(x)=\sum^n_{i=1}u^{(n)}_i(P_n(x))e_i,
$$
and repeat previous considerations.

{\it Step 2.} From the lemma \ref{lem_finite_dim} the convergence
$$
W_1(\nu^{(n)}_0,\nu^{(n)}_1)=\mathcal{N}^{(n)}(\alpha^{(n)})\to \mathcal{N}(\alpha), \ n\to\infty
$$
follows.

Observe that
\begin{equation}
\label{eq_43}
W_1(\nu^{(n)}_0,\nu^{(n)}_1)\leq W_1(\nu_0,\nu_1).
\end{equation}
Indeed, for any bounded $1-$Lipshitz function $f^{(n)}:\mathbb{R}^n\to\mathbb{R},$ the function
$$
f:X\to \mathbb{R}, f(x)=f^{(n)}(P_n(x)),
$$
is $1-$ Lipshitz function on $X$ with respect to the distance $d_H(x,y)=|x-y|.$  By the Kantorovich--Rubinstein theorem,
$$
\int_{\mathbb{R}^n}f^{(n)} d(\nu^{(n)}_1-\nu^{(n)}_0)=\int_X f d(\nu_1-\nu_0)\leq W_1(\nu_0,\nu_1).
$$
Then inequality \eqref{eq_43} follows after taking the supremum in $f^{(n)}$  and applying the Kantorovich-Rubinstein theorem again. The inequality
$$
\mathcal{N}(\alpha)\leq W_1(\nu_0,\nu_1) 
$$
is proved.

To prove the revesed inequality, consider a bounded $1-$Lipschitz function $f:X\to \mathbb{R}$ (relatively to the distance $d_H(x,y)=|x-y|$). From the assumption 
$$
\alpha=\frac{d(\nu_1-\nu_0)}{d\mu}\in L^2(X,\mu)
$$ and the definition of $\alpha_n$ \eqref{eq_44} it follows that 
$$
\int_X f d(\nu_1-\nu_0) = \lim_{n\to \infty} \int_X \mathbb{E}_\mu[f|P_n] \alpha_n d\mu.
$$
Observe that 
$$
\mathbb{E}_\mu[f|P_n](x)=f^{(n)}(P_n(x)),
$$
where $f^{(n)}$ is a bounded $1-$Lipschitz function on $\mathbb{R}^n.$ Then
$$
\int_X f d(\nu_1-\nu_0) = \lim_{n\to \infty}\int_{\mathbb{R}^n} f^{(n)} d(\nu^{(n)}_1-\nu^{(n)}_0)\leq \lim_{n\to\infty} W_1(\nu^{(n)}_0,\nu^{(n)}_1)=\mathcal{N}(\alpha).
$$
It remains to take the supremum in $f$  and applying the Kantorovich-Rubinstein theorem to obtain reversed inequality
$$
W_1(\nu_0,\nu_1) \leq \mathcal{N}(\alpha).
$$
The theorem is proved.

\end{proof}

\end{document}